\documentclass[11pt]{amsart}

\usepackage{amsmath,amssymb,amsthm,mathtools}
\usepackage[hidelinks]{hyperref}
\usepackage[margin=1in]{geometry}
\usepackage{xcolor} 
\newcommand{\F}{\mathbb{F}}

\newtheorem{theorem}{Theorem}[section]

\newtheorem{lemma}[theorem]{Lemma}

\theoremstyle{remark}

\title{Positivity preservers over finite fields II}

\author[D.~Guillot]{Dominique Guillot}
\address[D.~Guillot]{University of Delaware, Newark, DE, USA and Universit\'e Laval, Qu\'ebec, QC, Canada}
\email{\tt dguillot@udel.edu}

\author[H.~Gupta]{Himanshu Gupta}
\address[H.~Gupta]{University of Regina, Regina, SK, Canada}
\email{\tt himanshu.gupta@uregina.ca}

\author[P.K.~Vishwakarma]{Prateek Kumar Vishwakarma}
\address[P.K.~Vishwakarma]{Universit\'e Laval, Qu\'ebec, QC, Canada}
\email{\tt prateek-kumar.vishwakarma.1@ulaval.ca,~prateekv@alum.iisc.ac.in}

\author[C.H.~Yip]{Chi Hoi Yip}
\address[C.H.~Yip]{Hong Kong University of Science and Technology, Hong Kong}
\email{machyip@ust.hk}
\date{\today}

\keywords{positive definite matrix, entrywise transform, finite fields, field automorphism, character sums, Paley graph}

\subjclass[2020]{%
15B48 (primary); 
15B33, 
11T06, 
05E30 
(secondary)}

\begin{document}

\begin{abstract}
We say that a matrix over a finite field $\F_q$ is positive definite if it is symmetric and each of its leading principal minors is a nonzero square in $\F_q$. In previous work of the authors [\textit{J.~Algebra}, 2025], the entrywise positivity preservers on $M_n(\F_q)$ were classified for every $n\geq 2$, with one remaining case: $n=2$, $q\equiv 1\pmod 4$, and $q$ not a square. We settle this case by proving that every positivity preserver on $M_2(\F_q)$ is injective on the set $\F_q^+$ of nonzero squares whenever $q\equiv 1\pmod 4$. The proof combines an idempotent reduction of positivity preservers with a well-known property of quadratic characters. This yields the complete classification of entrywise positivity preservers over every finite field and in every fixed dimension.
\end{abstract}

\maketitle

\section{Introduction}

Let $\F_q$ denote the finite field with $q$ elements. Write $\F_q^\times := \F_q \setminus \{0\}$,
\[
\F_q^+ := \{x^2:x\in\F_q^\times\},
\qquad
\F_q^- := \F_q^\times\setminus\F_q^+.
\]
A matrix $A\in M_n(\F_q)$ is said to be \emph{positive definite} if it is symmetric and all its leading principal minors belong to $\F_q^+$; see \cite{ayyer2026positive, cooper2022positive, GGVY, guillot2025entrywise} and the references within for recent works and applications. A function
$f:\F_q\to\F_q$ is an \emph{entrywise positivity preserver} on $M_n(\F_q)$ if $f[A]:=(f(a_{ij}))$ is positive definite whenever $A=(a_{ij})$ is positive definite.

The classification of entrywise functions preserving positivity is a classical problem in matrix analysis, with roots going back to P\'olya and Szeg\H{o} in 1925; see the surveys \cite{BGKP-SurveyI, BGKP-SurveyII} and the monograph \cite{KhareMatrixAnalysis} for more details. The finite-field analogue was recently investigated in \cite{GGVY}. The classification obtained there is complete in dimension $n\geq3$:
the preservers are precisely the positive multiples of field automorphisms. The dimension-two
problem was also settled when $q$ is even, when $q\equiv3\pmod4$, and when $q$ is an odd square.

The only remaining case is
\[
n=2,\qquad q\equiv1\pmod4,\qquad q\text{ not a square}.
\]
A possible strategy was isolated in \cite[Proposition 5.8]{GGVY}: if
$p$ is a prime and $q=p^k\equiv1\pmod4$, if $f$ preserves positivity on $M_2(\F_q)$, if $f(1)=1$, and if
$f|_{\F_q^+}$ is injective, then
\[
f(x)=x^{p^j}, \qquad \forall x\in\F_q
\]
for some $0\leq j\leq k-1$. The proof of this proposition exploits the connection between positivity preservers and Paley graphs. Recall
that the Paley graph $P(q)$ has vertex set $\F_q$, with distinct
$x,y$ adjacent if and only if $x-y\in\F_q^+$. Since
$-1\in\F_q^+$, the graph is undirected. Using the injectivity
hypothesis, positivity preservation, and the strong regularity of
$P(q)$, the proof of \cite[Proposition 5.8]{GGVY} shows that $f|_{\F_q^+}$ is an automorphism of
$\Gamma(q)$, the subgraph induced by $\F_q^+$. It then combines the
classification of $\operatorname{Aut}(\Gamma(q))$ due to Muzychuk
and Kov\'acs \cite{muzychuk2005solution} with character-sum estimates
based on Weil's bound to determine $f$ on all of $\F_q$.

The purpose of this note is to prove that the injectivity hypothesis is automatic, thereby completing the classification of entrywise positivity preservers on $M_n(\F_q)$. 

\begin{theorem}\label{thm:injective}
Let $q\equiv1\pmod4$. If $f:\F_q\to\F_q$ preserves positive definiteness on
$M_2(\F_q)$, then the restriction of $f$ to $\F_q^+$ is injective.
\end{theorem}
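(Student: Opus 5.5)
The plan is to argue by contradiction: assume $f$ preserves positivity on $M_2(\F_q)$ and $f(x)=f(y)=:c$ for two distinct $x,y\in\F_q^+$. We then look for a positive definite matrix whose image $f[A]$ is singular, or has a non-square leading minor. Two facts are used throughout. A symmetric matrix $\begin{pmatrix} a&b\\ b&d\end{pmatrix}$ is positive definite iff $a\in\F_q^+$ and $ad-b^2\in\F_q^+$. Since $q\equiv1\pmod4$, $-1\in\F_q^+$.

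\textbf{Step 1: preliminary facts.} Testing $f$ on diagonal and other simple positive definite matrices should show that $f(\F_q^+)\subseteq\F_q^+$. Testing on $\begin{pmatrix} x&x\\ x&y\end{pmatrix}$, $\begin{pmatrix} x&y\\ y&x\end{pmatrix}$ and their transposes shows the following. A collision $f(x)=f(y)$ with $x\ne y$ squares forces $y-x\in\F_q^-$ and $x^2-y^2\in\F_q^-$, hence $x+y\in\F_q^-$. Otherwise one of these matrices is positive definite while its image $\begin{pmatrix} c&c\\ c&c\end{pmatrix}$ is singular. In terms of $t=y/x\in\F_q^+\setminus\{1\}$, this says $\chi(t-1)=\chi(t+1)=-1$, where $\chi$ is the quadratic character.

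\textbf{Step 2: idempotent reduction.} Composites of positivity preservers are positivity preservers, and $f$ is a self-map of a finite set. So some iterate $e=f^{N}$ satisfies $e\circ e=e$. If $f|_{\F_q^+}$ is not injective, then neither is $e|_{\F_q^+}$. We may therefore assume that $f=e$ is idempotent, so $f$ fixes every point of its image $I=f(\F_q)$. In particular the common value $c=f(x)=f(y)$ is itself a square in the same fiber. Replacing $y$ by $c$ when $x\ne c$, we get a collision between $x$ and a fixed point $c$. Step 1 then gives $\chi(x/c-1)=\chi(x/c+1)=-1$. Moreover, every matrix with entries in $I$ is mapped to itself, which rigidly constrains the fibers.

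\textbf{Step 3: producing a bad matrix.} This is the main obstacle. We need $b,d$ such that $\begin{pmatrix} x&b\\ b&d\end{pmatrix}$ is positive definite, i.e.\ $\chi(xd-b^2)=1$, while its image $\begin{pmatrix} c&f(b)\\ f(b)&f(d)\end{pmatrix}$ is not. Since $f$ is idempotent, the image depends only on the classes of $b$ and $d$. We can therefore compare with the matrix $\begin{pmatrix} c&b'\\ b'&d'\end{pmatrix}$, where $b'=f(b)$ and $d'=f(d)$ lie in $I$ and this matrix is fixed by $f$. The natural first attempt takes $b,d\in I$ with $cd=b^2$: the image is then singular, and we need $\chi(d)\chi(x-c)=1$. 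The next attempt applies Step 1 to other collisions $(x\lambda,c\lambda)$ obtained from the fibers. I expect to use the classical fact that $\sum_{s}\chi(s^2-a)=-1$ for $a\neq0$, or more generally that $\sum_s\chi\big((s-\alpha)(s-\beta)\big)=-1$ for $\alpha\ne\beta$. This should show that some $s$ satisfies $\chi(s-1)$ and $\chi(s+1)$ not both equal to $-1$ on the relevant orbit. Hence the conditions from Step 1, applied to a suitable family of forced collisions, cannot all hold, and this gives the contradiction. Closing this counting argument rigorously, and in particular checking that the family of forced collisions is large enough for the character-sum identity to bite, is where most of the work will lie.
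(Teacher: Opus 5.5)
Your reduction to an idempotent iterate (Step 2) is exactly what the paper does. However, Step 3 is only a hoped-for counting argument, and it targets a condition that is wrong.

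In Step 1, the matrix $\begin{pmatrix} x&x\\ x&y\end{pmatrix}$ correctly gives $\chi(y-x)=-1$. The matrix $\begin{pmatrix} x&y\\ y&x\end{pmatrix}$ gives $\chi(x^2-y^2)\in\{-1,0\}$; you omit the case $y=-x$. But then $\chi(x+y)=\chi(x^2-y^2)\chi(x-y)\in\{1,0\}$. So with $t=y/x$ the constraint is $\chi(t-1)=-1$ and $\chi(t+1)\neq-1$, not $\chi(t-1)=\chi(t+1)=-1$. This corrected condition holds for many $t$, so no character-sum count can refute a single collision. You also have no way to produce a large family of collisions: nothing relates $f(\lambda x)$ to $f(\lambda c)$, since $f$ is not known to commute with scaling. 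Your ``first attempt'' cannot succeed either. The condition $cd=b^2$ with $c\in\F_q^+$ and $b\neq0$ forces $d\in\F_q^+$, so $\chi(d)\chi(x-c)=-1$.

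The missing idea is to bring in a \emph{non-square}. A collision between two squares is only half contradictory, because both $\begin{pmatrix} x&x\\ x&y\end{pmatrix}$ and $\begin{pmatrix} x&y\\ y&y\end{pmatrix}$ need $\chi(y-x)=1$. By contrast, a collision $h(z)=v$ between a non-square $z$ and a square fixed point $v$ is always contradictory:
if $v-z\in\F_q^+$, then $\begin{pmatrix} v&v\\ v&z\end{pmatrix}$ is positive definite;
if $v-z\in\F_q^-$, then $\begin{pmatrix} v&z\\ z&z\end{pmatrix}$ is positive definite, since its determinant $z(v-z)$ is a product of two non-squares.
In both cases $h$ sends the matrix to the singular all-$v$ matrix.

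So it suffices to find $z\in\F_q^-$ with $v:=h(z)\in\F_q^+$, since idempotence then gives $h(v)=v$. The paper obtains this from Lemma~\ref{lem:paley-separation}. For the colliding squares $a,b$ with $h(a)=h(b)=s$, pick $z\in\F_q^-$ with $a-z\in\F_q^+$ and $b-z\in\F_q^-$. Then $\begin{pmatrix} a&a\\ a&z\end{pmatrix}$ and $\begin{pmatrix} b&z\\ z&z\end{pmatrix}$ are positive definite. The determinants of their images, $s(v-s)$ and $v(s-v)$, both lie in $\F_q^+$, which forces $v\in\F_q^+$. No further character-sum counting is needed.
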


The proof of Theorem \ref{thm:injective} works uniformly for every
$q\equiv1\pmod4$. In particular, it does not use the additional clique structure available when $q$ is a square, as was exploited in \cite[Section 6]{GGVY}. The proof of the square case in the previous paper is sophisticated and relies on the special subfield structure as well as several structural results about maximal cliques in Paley graphs of square order. However, for Paley graphs of nonsquare order, these structural results are not available; in fact, determining the asymptotic of their clique number is a notoriously open problem. This is the main challenge of extending the proof from \cite{GGVY} from the square case to all $q \equiv 1 \pmod 4$. Here, we bypass this technical barrier by considering iterates of positivity preservers and an idempotent reduction.

Combining Theorem \ref{thm:injective} with the results in \cite{GGVY} yields the full classification of entrywise positivity preservers over every finite field and in every fixed dimension.

\begin{theorem}[Complete classification]\label{thm:classification}
Let $q=p^k$ be a prime power, let $n\geq1$, and let $f:\F_q\to\F_q$.
Then $f$ preserves positive definiteness on $M_n(\F_q)$ precisely in the following cases.
\begin{enumerate}
\item If $n=1$, then
\[
f(\F_q^+)\subseteq\F_q^+.
\]

\item If $n=2$ and $q$ is even, then $f$ is a bijective monomial:
\[
f(x)=cx^m,
\qquad
c\in\F_q^\times,\quad
1\leq m\leq q-1,\quad
\gcd(m,q-1)=1.
\]

\item If $n\geq3$ and $q$ is even, then
\[
f(x)=c x^{2^j}
\]
for some $c\in\F_q^\times$ and $0\leq j\leq k-1$.

\item If $n\geq2$ and $q$ is odd, then
\[
f(x)=c x^{p^j}
\]
for some $c\in\F_q^+$ and $0\leq j\leq k-1$.
\end{enumerate}
\end{theorem}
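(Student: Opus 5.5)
The plan is to assemble Theorem~\ref{thm:classification} from the results of \cite{GGVY} together with Theorem~\ref{thm:injective}. Only one case needs a new argument: $n=2$, $q\equiv1\pmod4$, $q$ not a square. Every other case is already covered in \cite{GGVY}, so I will treat those by citation and a short check of the statements.

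\textbf{Cases handled by earlier results or directly.}
\begin{itemize}
\item For $n=1$, a $1\times1$ matrix $(a)$ is positive definite exactly when $a\in\F_q^+$. So $f$ preserves positivity if and only if $f(\F_q^+)\subseteq\F_q^+$, which is item~(1).
\item For $n\geq3$, both for $q$ even (item~(3)) and $q$ odd (item~(4)), I will cite the classification in \cite{GGVY}: the preservers are the positive multiples of field automorphisms.
\item For $n=2$ and $q$ even (item~(2)), I will cite \cite{GGVY}.
\item For $n=2$ with $q$ odd, \cite{GGVY} settles $q\equiv3\pmod4$ and $q$ an odd square, giving item~(4) in those cases.
\end{itemize}

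\textbf{Sufficiency in item~(4).} Let $f(x)=cx^{p^j}$ with $c\in\F_q^+$, and write $\sigma(x)=x^{p^j}$. Since $\sigma$ is a ring automorphism, it commutes with determinants of leading principal submatrices. It also maps $\F_q^+$ onto $\F_q^+$, so $\sigma[A]$ is positive definite whenever $A$ is. Scaling by $c$ multiplies the $r\times r$ leading minor by $c^r\in\F_q^+$, so positive definiteness is preserved.

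\textbf{The remaining case: necessity for $n=2$, $q\equiv1\pmod4$, $q$ not a square.}
\begin{enumerate}
\item Apply $f$ to the identity $I_2$, which is positive definite. The $(1,1)$ entry of $f[I_2]$ is $f(1)$, and it must lie in $\F_q^+$. Set $c:=f(1)$.
\item Define $g:=c^{-1}f$. Since $c^{-1}\in\F_q^+$, the same scaling argument as above shows that $g$ preserves positivity on $M_2(\F_q)$. Moreover $g(1)=1$.
\item By Theorem~\ref{thm:injective}, $g|_{\F_q^+}$ is injective.
\item All hypotheses of \cite[Proposition 5.8]{GGVY} now hold. It yields $g(x)=x^{p^j}$ for all $x\in\F_q$ and some $0\leq j\leq k-1$.
\item Hence $f(x)=cx^{p^j}$ with $c\in\F_q^+$, as claimed in item~(4).
\end{enumerate}

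\textbf{Expected obstacle.} Beyond Theorem~\ref{thm:injective} itself, the only real work is bookkeeping. One must make sure that the hypotheses of \cite[Proposition 5.8]{GGVY}, namely the normalization $f(1)=1$ and the condition $q\equiv1\pmod4$, match exactly what has been arranged in steps~(1)--(3). One must also check that the $n\geq2$ odd statement in \cite{GGVY} is stated with the constant $c\in\F_q^+$, so that items~(3) and~(4) are consistent with the citation.
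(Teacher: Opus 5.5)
Your proposal is correct and follows essentially the same route as the paper: normalize by $c=f(1)\in\F_q^+$, invoke Theorem~\ref{thm:injective} to get injectivity of $g=c^{-1}f$ on $\F_q^+$, conclude with \cite[Proposition 5.8]{GGVY}, and cite \cite{GGVY} for all other cases. The only difference is that the paper applies this argument to all $q\equiv1\pmod4$ with $n=2$, so it does not need the square-order result of \cite[Section 6]{GGVY}, which you cite for odd squares; you also spell out the sufficiency check for item~(4), which the paper states in one line.
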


Thus, in odd characteristic, the dimension-two classification is identical to the classification in every dimension $n\geq3$.

\section{Injectivity on the positive elements}

We now prove Theorem \ref{thm:injective}. Observe that if $f:\F_q\to\F_q$ preserves positivity on $M_2(\F_q)$, then
\begin{equation}\label{eq:positive-to-positive}
f(\F_q^+)\subseteq\F_q^+.
\end{equation}
Indeed, if $a\in\F_q^+$, then $aI_2$ is positive definite, so the first leading principal minor of $f[aI_2]$ shows that $f(a)\in\F_q^+$.

We use the following well-known property of quadratic characters; see
\cite[Corollary 5.5]{GGVY} for a proof via the strong regularity of Paley graphs.

\begin{lemma}\label{lem:paley-separation}
Let $a,b\in\F_q^+$ be distinct. Then there exists $z\in\F_q^-$
such that
\[
    a-z\in\F_q^+
    \qquad\text{and}\qquad
    b-z\in\F_q^-.
\]
\end{lemma}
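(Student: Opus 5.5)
The plan is to count the admissible $z$ directly with the quadratic character $\chi$ of $\F_q$ (so $\chi(0)=0$), using $\chi(-1)=1$ since $q\equiv1\pmod4$. Let $N$ be the number of $z\in\F_q\setminus\{0,a,b\}$ with $\chi(z)=-1$, $\chi(a-z)=1$ and $\chi(b-z)=-1$. For such $z$ all three factors below are nonzero, so
\[
N=\frac18\sum_{z\in\F_q\setminus\{0,a,b\}}\bigl(1-\chi(z)\bigr)\bigl(1+\chi(a-z)\bigr)\bigl(1-\chi(b-z)\bigr).
\]
It suffices to show $N>0$.

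First I will expand the product into eight terms. I will then restore the three excluded points $z=0,a,b$ to each sum; this changes the total by a bounded amount, at most a small absolute constant such as $7$ after dividing by $8$. The resulting sums are evaluated as follows:
\begin{itemize}
\item the constant term gives $q$;
\item each single-character sum $\sum_z\chi(z)$, $\sum_z\chi(a-z)$, $\sum_z\chi(b-z)$ vanishes;
\item each pairwise sum has the form $\sum_z\chi\bigl((z-u)(z-v)\bigr)$ with $u\neq v$ among $0,a,b$, and equals $-1$ exactly. This is the standard evaluation, and it is where the strong regularity of $P(q)$ enters: it encodes the same information as the parameters $\lambda=(q-5)/4$ and $\mu=(q-1)/4$.
\end{itemize}

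The only term not determined exactly is the cubic sum $S=\sum_z\chi\bigl(z(a-z)(b-z)\bigr)$. Because $0,a,b$ are distinct, the cubic is squarefree. Weil's bound then gives $|S|\le 2\sqrt q$. Collecting terms yields
\[
N\ \ge\ \frac{q-2\sqrt q-C}{8}
\]
for an explicit small constant $C$. Hence $N>0$ for all $q$ beyond a small threshold, roughly $q\ge 30$.

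The main obstacle is the finitely many small $q\equiv1\pmod4$ below that threshold, such as $5,13,17,29$. For these I would first try to avoid case checking with a symmetry argument that controls $S$ exactly. The map $z\mapsto ab/z$ preserves $\chi(z)$. On the classes with $\chi(z)=-1$ it sends the sign pattern $\bigl(\chi(a-z),\chi(b-z)\bigr)=(t,u)$ to $(-u,-t)$. It therefore fixes the target pattern $(1,-1)$ and swaps $(1,1)$ with $(-1,-1)$.

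Combining this involution with the exact pairwise marginals should give linear relations among the four counts $n(-1,t,u)$. In particular, $n(-1,1,1)+n(-1,1,-1)$ equals the exactly computable number of nonsquares $z$ with $a-z$ square, about $(q-1)/4$. This should force $n(-1,1,-1)>0$, using that $a\neq b$. If the relations turn out not to close up, I will simply verify the remaining small fields by direct computation, since only a handful of $(q,a,b)$ need checking up to the scaling $z\mapsto cz$ with $c\in\F_q^+$.
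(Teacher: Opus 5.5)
Your character-sum route works and is genuinely different from the paper's, but the ``main obstacle'' you identify comes from loose bookkeeping, and the symmetry argument you propose for it cannot succeed. Done exactly, the count has no small cases. At $z=0,a,b$ one factor of your summand vanishes: $1-\chi(b)$, $1-\chi(a)$, $1-\chi(b)$ respectively, since $a,b\in\F_q^+$. So restoring these points changes nothing. The cross terms are $-\chi(z)\chi(a-z)+\chi(z)\chi(b-z)-\chi(a-z)\chi(b-z)$, contributing $1-1+1=1$. Hence
\[
8N=\sum_{z\in\F_q}\bigl(1-\chi(z)\bigr)\bigl(1+\chi(a-z)\bigr)\bigl(1-\chi(b-z)\bigr)=q+1+S\ \ge\ q+1-2\sqrt q=(\sqrt q-1)^2>0
\]
for every $q$.

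As for $z\mapsto ab/z$: its only consequence is $n(-1,1,1)=n(-1,-1,-1)$. That already follows by subtracting the marginals $n(-1,1,1)+n(-1,1,-1)=\frac{q-1}{4}=n(-1,1,-1)+n(-1,-1,-1)$. So the linear system stays one-parameter and cannot force $n(-1,1,-1)>0$. What forces it is an inequality. Write $\mathcal N(x)=x+\F_q^+$ for neighbourhoods in $P(q)$. Then $n(-1,1,1)\le|\mathcal N(a)\cap\mathcal N(b)|-1\le\frac{q-1}{4}-1$, where the $-1$ accounts for $z=0$, a common neighbour of $a$ and $b$ that is not a nonsquare.

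That inequality is the essence of a strong-regularity proof of the kind the paper points to; the paper does not reprove the lemma but cites \cite[Corollary~5.5]{GGVY}. The admissible $z$ form exactly $\mathcal N(a)\setminus\bigl(\mathcal N(0)\cup\mathcal N(b)\bigr)$, since the points $0$ and $b$ drop out automatically because $0\sim b$. Inclusion--exclusion with $\lambda=\frac{q-5}{4}$ and $\mu=\frac{q-1}{4}$ then gives
\[
N=\tfrac{q-1}{2}-\lambda-|\mathcal N(a)\cap\mathcal N(b)|+|\mathcal N(0)\cap\mathcal N(a)\cap\mathcal N(b)|\ \ge\ \tfrac{q-1}{2}-\tfrac{q-5}{4}-\tfrac{q-1}{4}=1.
\]
The triple intersection carries the same information as your cubic sum $S$. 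Here, however, it enters as a cardinality with a plus sign and can simply be discarded. So the strong-regularity route is elementary and uniform in $q$, needing only the pairwise evaluations you already have. Yours spends the Hasse--Weil bound to obtain the sharper but unneeded estimate $N=q/8+O(\sqrt q)$.
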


We now present the proof of Theorem~\ref{thm:injective}.

\begin{proof}[Proof of Theorem~\ref{thm:injective}]
Suppose for a contradiction that $f|_{\F_q^+}$ is not injective. Choose distinct $a,b\in\F_q^+$ such that $f(a)=f(b).$ Since $\F_q$ is finite, the sequence of iterates $f,f^{(2)},f^{(3)},\ldots$, where the superscripts denote composition, is eventually periodic. Thus there exists a positive integer $m$ such that  $f^{(2m)}=f^{(m)}$. Set $h:=f^{(m)}$. Then $h$ is an idempotent positivity preserver, and
\[
    s:=h(a)=h(b)\in\F_q^+.
\]

By Lemma~\ref{lem:paley-separation}, choose $z\in\F_q^-$ such that
$a-z\in\F_q^+$ and $b-z\in\F_q^-$, and put $v:=h(z)$. Consider the following matrices:
\[
    A_1=
    \begin{pmatrix}
        a&a\\
        a&z
    \end{pmatrix},
    \qquad
    A_2=
    \begin{pmatrix}
        b&z\\
        z&z
    \end{pmatrix}.
\]
Both matrices are positive definite: their upper-left entries
$a,b$ lie in $\F_q^+$, and
\[
    \det A_1=-a(a-z)\in\F_q^+,
    \qquad
    \det A_2=z(b-z)\in\F_q^+.
\]
Indeed, $-1,a,a-z$ are in $\F_q^+$, whereas $z$ and $b-z$ are in $\F_q^-$. Hence the images of $A_1$ and $A_2$ under $h$ are positive
definite, and therefore
\[
    s(v-s)\in\F_q^+,
    \qquad
    v(s-v)\in\F_q^+.
\]
Since $s,-1\in\F_q^+$, the first relation gives
$s-v\in\F_q^+$. The second then implies that $v\in\F_q^+$.

Moreover, idempotence gives
\[
    h(v)=h(h(z))=h(z)=v.
\]
Since $v\in\F_q^+$ and $z\in\F_q^-$, we have $v\neq z$. If
$v-z\in\F_q^+$, then
\[
    C=
    \begin{pmatrix}
        v&v\\
        v&z
    \end{pmatrix}
\]
is positive definite, since $\det C=v(z-v)\in\F_q^+$. If instead
$v-z\in\F_q^-$, then
\[
    C=
    \begin{pmatrix}
        v&z\\
        z&z
    \end{pmatrix}
\]
is positive definite, since $\det C=z(v-z)\in\F_q^+$. In either case,
however,
\[
    h[C]=
    \begin{pmatrix}
        v&v\\
        v&v
    \end{pmatrix},
\]
which is singular. This contradiction proves that $f|_{\F_q^+}$ is
injective.
\end{proof}

\section{Proof of the complete classification}

\begin{proof}[Proof of Theorem \ref{thm:classification}]
The case $n=1$ is immediate from the definition.

Suppose $n\geq2$. If $q$ is even, statements (2) and (3) are precisely
\cite[Theorem A]{GGVY}. If $q\equiv3\pmod4$, statement (4) is
\cite[Theorem B]{GGVY}. If $q\equiv1\pmod4$ and $n\geq3$, statement (4) is
\cite[Theorem C]{GGVY}.

It remains only to consider $n=2$ and $q=p^k\equiv1\pmod4$. Let
$f:\F_q\to\F_q$ preserve positivity. By \eqref{eq:positive-to-positive},
\[
c:=f(1)\in\F_q^+.
\]
The normalized map
\[
g(x):=c^{-1}f(x)
\]
also preserves positivity and satisfies $g(1)=1$. By Theorem
\ref{thm:injective}, $g$ is injective on $\F_q^+$. Hence
\cite[Proposition 5.8]{GGVY} gives
\[
g(x)=x^{p^j}
\]
for some $0\leq j\leq k-1$. Therefore
\[
f(x)=c x^{p^j}, \qquad c\in \F_q^+.
\]

Conversely, every map of this form preserves positive definiteness in every dimension. This completes the classification.
\end{proof}

\section*{Acknowledgments}

\noindent{\bf AI disclosure statement.} ChatGPT 5.6 Sol by OpenAI was used to explore proof strategies for this paper and assist with its writing. All mathematical arguments and technical details were independently verified by the authors, who take full responsibility for the content.
\medskip

D.G. was partially supported by NSF grant \#2350067. H.G. acknowledges support from PIMS (Pacific Institute for the Mathematical Sciences) Postdoctoral Fellowships. P.K.V. was supported by the Centre de recherches math\'ematiques and Universit\'e Laval (CRM--Laval) Postdoctoral Fellowship, and he acknowledges support from a SwarnaJayanti Fellowship from DST and SERB (Govt.~of India).


\begin{thebibliography}{9}
\bibitem{ayyer2026positive}
Arvind Ayyer and Shubhanshu Prasad.
\newblock Positive definite, positive semidefinite and totally positive matrices over finite fields.
\newblock arXiv:2608.17702, 2026.

\bibitem{BGKP-SurveyI}
Alexander Belton, Dominique Guillot, Apoorva Khare, and Mihai Putinar.
\newblock A panorama of positivity. I: Dimension free.
\newblock In {\em Analysis of Operators on Function Spaces: The Serguei Shimorin Memorial Volume}, pages 117--165. Birkh\"auser, Cham, 2019.

\bibitem{BGKP-SurveyII}
Alexander Belton, Dominique Guillot, Apoorva Khare, and Mihai Putinar.
\newblock A panorama of positivity. II: Fixed dimension.
\newblock In {\em Complex Analysis and Spectral Theory}, volume 743 of {\em Contemporary Mathematics}, pages 109--150. American Mathematical Society, Providence, RI, 2020.

\bibitem{cooper2022positive}
J.~Cooper, E.~Hanna, and H.~Whitlatch.
\newblock Positive-definite matrices over finite fields.
\newblock {\em Rocky Mountain J. Math.}, 54(2):423--438, 2024.

\bibitem{GGVY}
D.~Guillot, H.~Gupta, P.~K.~Vishwakarma, and C.~H.~Yip,
\newblock Positivity preservers over finite fields.
\newblock {\em J. Algebra}, 684:479--523, 2025.

\bibitem{guillot2025entrywise}
D.~Guillot, H.~Gupta, P.~K.~Vishwakarma, and C.~H.~Yip.
\newblock Entrywise transforms and positive definite matrices over finite fields.
\newblock In {\em 37th International Conference on Formal Power Series and Algebraic Combinatorics (FPSAC 2025), S\'em. Lothar. Combin. B}, volume~93, 2025.

\bibitem{KhareMatrixAnalysis}
Apoorva Khare.
\newblock {\em Matrix Analysis and Entrywise Positivity Preservers}.
\newblock London Mathematical Society Lecture Note Series, volume 471. Cambridge University Press, Cambridge, 2022.

\bibitem{muzychuk2005solution}
M.~Muzychuk and I.~Kov\'acs.
\newblock A solution of a problem of A.~E.~Brouwer.
\newblock {\em Des. Codes Cryptogr.}, 34(2--3):249--264, 2005.

\end{thebibliography}
\end{document}